\documentclass{article}
\usepackage{graphicx} 
\usepackage{subcaption} 
\usepackage{color}
\usepackage[colorlinks=true,allcolors=blue
]{hyperref}
\usepackage{amsmath, amssymb, amsthm}
\usepackage{mathrsfs}
\usepackage{mathtools}
\usepackage{fullpage}
\usepackage[noadjust,nocompress]{cite}
\usepackage{graphics}
\usepackage{xcolor}
\usepackage{tikz}

\usepackage{thmtools}
\usepackage[noabbrev,capitalize,nameinlink]{cleveref}

\newtheorem{theorem}{Theorem}
\newtheorem{proposition}[theorem]{Proposition}
\newtheorem{lemma}[theorem]{Lemma}
\newtheorem{claim}[theorem]{Claim}
\newtheorem{corollary}[theorem]{Corollary}

\newtheorem{ques}[theorem]{Question}

\newcommand*{\myproofname}{Proof}
\newenvironment{claimproof}[1][\myproofname]{\begin{proof}[#1]}{\end{proof}}

\newcommand*{\floorfrac}[2]{\mathopen{}\left\lfloor\frac{#1}{#2}\right\rfloor\mathclose{}}
\newcommand*{\bceil}[1]{\left\lceil #1\right\rceil}
\newcommand*{\bfloor}[1]{\left\lfloor #1\right\rfloor}

\title{Ramsey number of a cycle versus a graph of a given size} 
\author{Stijn Cambie\thanks{Department of Computer Science, KU Leuven Campus Kulak-Kortrijk, 8500 Kortrijk, Belgium. Supported by a postdoctoral fellowship by the Research Foundation Flanders (FWO) with grant number 1225224N. Email: \protect\href{mailto:stijn.cambie@hotmail.com}{\protect\nolinkurl{stijn.cambie@hotmail.com}}} \and Andrea Freschi
\thanks{HUN-REN, Alfr{\'e}d R{\'e}nyi Institute of Mathematics, Budapest, Hungary. Research partially supported by ERC Advanced Grants ``GeoScape", no. 882971 and ``ERMiD", no. 101054936. E-mail: \protect\href{freschi.andrea@renyi.hu}{\protect\nolinkurl{freschi.andrea@renyi.hu}}} \and Patryk Morawski
\thanks{Department of Mathematics, ETH Z\"urich, Switzerland. Research supported in part by SNSF grant 200021-228014.
Email:  \protect\href{patryk.morawski@math.ethz.ch}{\protect\nolinkurl{patryk.morawski@math.ethz.ch}}. } \and Kalina Petrova
\thanks{Institute of Science and Technology Austria (ISTA), Klosterneurburg 3400, Austria. Supported by the European Union’s Horizon 2020 research and innovation programme
under the Marie Sk{\l}odowska-Curie grant agreement No 101034413 \includegraphics[width=5.5mm, height=4mm]{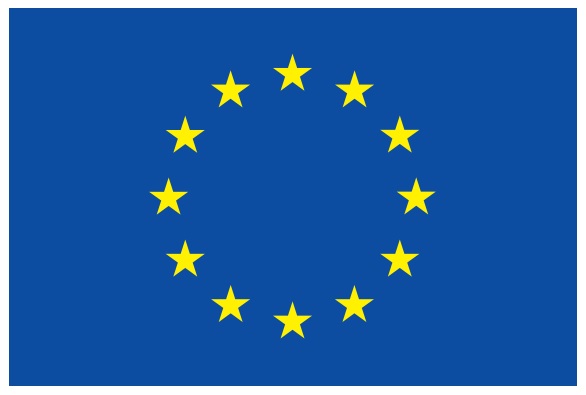}. Email: \protect\href{kalina.petrova@ist.ac.at}{\protect\nolinkurl{kalina.petrova@ist.ac.at}}. } \and Alexey Pokrovskiy
\thanks{Department of Mathematics, University College London, UK. Email: \protect\href{a.pokrovskiy@ucl.ac.uk}{\protect\nolinkurl{a.pokrovskiy@ucl.ac.uk}}}
}

\begin{document}

\maketitle

\begin{abstract}
    In this paper, we prove that for every $k$ and every graph $H$ with $m$ edges and no isolated vertices, the Ramsey number $R(C_k,H)$ is at most $2m+\floorfrac {k-1}2$, provided $m$ is sufficiently large with respect to $k$.  
    This settles a problem of Erd{\H{o}}s, Faudree, Rousseau and Schelp.
\end{abstract}

\section{Introduction}
For two graphs $G$ and $H$, the Ramsey number $R(G,H)$ is defined as the smallest $N$ such that in every red-blue colouring of the edges of~$K_N$ (i.e., the complete graph on~$N$ vertices), there is a red copy of $G$ or a blue copy of $H$.
The existence of these numbers was proven by Ramsey~\cite{ramsey1930}.
The first good quantitative bound $R(K_n,K_n)\le 4^n$ was later obtained by Erd\H{o}s and Szekeres~\cite{Erdös1935}. 
Obtaining better estimates for~$R(K_n,K_n)$ is a central  problem in combinatorics, and an upper bound of the form~$4^{(1-c)n}$ for some~$c>0$ has only been obtained recently~\cite{campos2023exponential}. 

Understanding Ramsey numbers of non-complete graphs is also important.
In particular, a substantial part of Ramsey theory research has focused on obtaining upper bounds on $R(G,G)$ in terms of various parameters of $G$. 
Note that the number of vertices of $G$, denoted as $|G|$, is not a very interesting parameter on its own: since $R(G,G)\le R(K_{|G|}, K_{|G|})$ for any graph $G$, proving bounds on $R(G,G)$ which only take~$|G|$ into account is equivalent to proving bounds on Ramsey numbers of complete graphs.
On the other hand, proving bounds on $R(G,G)$ in terms of the number of edges of $G$, denoted as $e(G)$, is a different and very interesting problem. 
The best known bound here is by Sudakov~\cite{sudakov2011conjecture} who proved that $R(G,G)\le 2^{250\sqrt{e\smash{(G)}}}$ for any graph $G$ with no isolated vertices, solving a well-known conjecture of Erd\H{o}s~\cite{erdos1984someproblems}. 

    Turning to the asymmetric case, researchers have been interested in obtaining bounds on $R(G,H)$ where $G$ is a fixed graph and $H$ varies. 
    In analogy to the previous paragraph, we again want to determine the best general upper bound in terms of $e(H)$ and no other parameters of $H$. 
    The simplest non-trivial case is when $G=K_3$.
    This problem was posed by Harary, who conjectured that $R(K_3, H)\le 2e(H)+1$ for any graph $H$ with no isolated vertices; this is tight when $H$ is a tree or a matching.
    Following weaker upper bounds by Erd\H{o}s, Faudree, Rosseau, and Schelp~\cite{erdos1987ramsey} and by Sidorenko~\cite{sidorenko1991upper}, Harary's Conjecture was fully proved via two independent proofs by  Goddard and Kleitman and by Sidorenko.
    \begin{theorem}[Goddard and Kleitman~\cite{goddard1994upper}; Sidorenko \cite{Sidorenko93}]\label{Theorem_triangle}
        For any graph $H$ with no isolated vertices, we have $R(K_3, H)\le 2e(H)+1$.
    \end{theorem}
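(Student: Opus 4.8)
The plan is to establish the statement in its equivalent form: every red--blue colouring of $K_N$ with $N=2e(H)+1$ and no red triangle contains a blue copy of $H$. So I assume the red graph $R$ is triangle-free --- equivalently, the blue graph $B=\overline R$ has independence number at most $2$ --- and I write $m:=e(H)$ and $n:=|H|$, arguing by induction on $m$. The first step is a reduction peeling off isolated edges of $H$: if $H=K_2\sqcup H'$ then $H'$ has $m-1$ edges and no isolated vertex, so by induction the restriction of the colouring to any $2m-1$ vertices contains a blue copy of $H'$ on at most $|H'|\le 2m-2$ vertices; among the $\ge 3$ remaining vertices some edge is blue (in a triangle-free red graph no three vertices are pairwise red), and it serves as the $K_2$. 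Hence I may assume every component of $H$ has at least two edges; in particular $n\le 3m/2<N$.

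Now I would embed $H$ into $B$ greedily, processing the components $C_1,C_2,\dots$ of $H$ in turn and, within a component, placing its vertices in a breadth-first order into currently unused vertices of $K_N$ so that each new vertex is blue-adjacent to the images of its already-placed neighbours. Since $n<N$ an unused vertex is always available; the whole content is to choose one with the required blue adjacencies, and this I would control by a dichotomy on the maximum red degree $\Delta(R)$. On the one hand, the red neighbourhood of any vertex is an independent set of $R$, hence a blue clique; so if $\Delta(R)$ is large then $B$ contains a large blue clique, into which one dumps the \emph{dense} part of $H$ (or all of $H$, when $H$ is small enough to fit). On the other hand, if $\Delta(R)$ is small --- below roughly $2m-n$ --- then every vertex of $B$ has blue degree at least $2m-\Delta(R)\ge n$, so $B$ has minimum degree $\ge n$; then the breadth-first \emph{tree} edges are handled for free, since when a vertex $v$ is placed and its unique already-placed neighbour $v'$ sits at $x$, the $\ge n$ blue-neighbours of $x$ include at most $n-2$ already-used vertices, leaving an unused one.

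The main obstacle is the breadth-first \emph{non-tree} edges: placing a vertex $v$ with $t\ge 2$ already-placed neighbours requires an unused vertex in the common blue-neighbourhood of $t$ prescribed vertices, and the crude estimate $\bigl|\bigcap_{i\le t}N_B(x_i)\bigr|\ge t(2m-\Delta(R))-(t-1)\cdot 2m$ collapses once $t$ or $\Delta(R)$ grows. Here one must genuinely exploit $\alpha(B)\le 2$ and organise the embedding so that the \emph{dense} part of each component of $H$ (the vertices carrying the high back-degrees) is placed inside a blue clique --- of which a vertex of large red degree supplies one --- while the \emph{sparse} part (long paths, leaves, the breadth-first tree) is grown out using the large blue minimum degree. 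Getting these two mechanisms to coexist, and checking that the number of vertices used never exceeds $N=2e(H)+1$ against an adversary who may take $R$ to be a large complete bipartite graph (making every blue clique small) or a large matching (pinning the blue minimum degree at about $m$), is exactly the crux of the theorem; that the two obstructions meet precisely at $2e(H)+1$ is witnessed by $R=K_{m,m}$ together with $H=K_{1,m}$.
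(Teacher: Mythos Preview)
The paper does not prove this theorem; it is quoted as a known result of Goddard--Kleitman and Sidorenko and used only as background motivation. So there is no proof in the paper to compare against.

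Evaluating your proposal on its own merits: it is not a proof but a strategy sketch, and you essentially say so yourself in the final paragraph. The reduction peeling off isolated-edge components is correct and clean. After that, however, the argument dissolves. The dichotomy on $\Delta(R)$ is never made quantitative (what threshold separates ``large'' from ``small''?); the phrase ``dense part of $H$'' is never defined; and your own inclusion--exclusion estimate for the common blue neighbourhood already shows that the greedy BFS embedding fails once a vertex has back-degree $t\ge 2$, which happens for any $H$ that is not a forest. Saying that one should place the dense part inside a blue clique and grow the sparse part via large minimum degree is a reasonable heuristic, but you give no mechanism for interfacing the two pieces, and the closing sentence (``getting these two mechanisms to coexist \dots\ is exactly the crux of the theorem'') is an explicit admission that the main difficulty is unresolved.

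To turn this into an actual proof you would need, at minimum, a precise decomposition of $H$ into a piece that fits in a clique of size $\Delta(R)+1$ and a remainder that can be embedded greedily in $B$ with the available degree budget, together with a verification that these budgets balance at exactly $2e(H)+1$. Neither Sidorenko's nor Goddard--Kleitman's published argument follows the BFS-greedy route you outline; both use more structured inductions that avoid the common-neighbourhood obstacle you identify.
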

    
    What about when we replace $K_3$ by some other graph $G$? 
    In~\cite{EFRS93}, Erd\H{o}s, Faudree, Rosseau, and Schelp initiated a systematic study of this question, proving many bounds and posing many open problems.
    They specifically introduced the notion of ``Ramsey size-linear" graphs, that is, graphs $G$ such that for some constant $C_G=C_G(G)$ we have $R(G,H)\le C_G\cdot e(H)$ for every graph $H$ with no isolated vertex.
    They discovered that the average degree of $G$ is largely responsible for it being Ramsey size-linear or not.
    In particular, they proved that graphs with $e(G)\ge 2|G|-2$ are not Ramsey size-linear, whereas graphs with $e(G)\le |G|+1$ always are.
    In the intermediate range, both behaviours are possible and determining which graphs are Ramsey size-linear is a difficult problem. The behaviour can be quite complicated --- for example Wigderson~\cite{Wigderson24+} showed that there are infinitely many graphs which are not Ramsey size-linear, but each of whose subgraphs is Ramsey size-linear, answering a question of Erd\H{o}s, Faudree, Rosseau, and Schelp. Even for specific simple $G$, like subdivisions of $K_4$, we still do not have a full answer~\cite{balister2002note,bradavc2024ramsey}. 

   For a given Ramsey size-linear graph $G$, one can go further and ask what the optimal constant $C_G$ should be.
   Generalizing Harary's original conjecture, Erd\H{o}s, Faudree, Rosseau, and Schelp posed the following question for the case where $G$ is a cycle $C_k$ on $k$ vertices:
   \begin{ques}[Erd\H{o}s, Faudree, Rosseau, and Schelp \cite{EFRS93}]\label{Conjecture_EFRS}
   For each $k$, is it true that all graphs $H$ with no isolated vertices and $e(H)$ sufficiently large with respect to~$k$ satisfy $R(C_k,H)\le 2e(H)+\floorfrac{k-1}{2} $?
    \end{ques}

    The upper bound in Question~\ref{Conjecture_EFRS} is tight, as seen by taking $H$ to be a matching (i.e., a collection of vertex-disjoint edges) and considering a red-blue colouring of the complete graph on $(2e(H)-1)+\floorfrac{k-1}{2}$ vertices consisting of a blue clique of order $2e(H)-1$ and all other edges red.
    Question~\ref{Conjecture_EFRS} appears as Problem $570$ in the database of Erd\H{o}s problems~\cite{BloomErdosProblems}, and was also previously listed by Chung and Graham in their book~\cite{chung1998erdos} and as problem $35$ in the database of hard graph theory Erd\H{o}s problems~\cite{ChungErdosProblems} from 2010.

    Erd\H{o}s, Faudree, Rosseau, and Schelp~\cite{EFRS93} verified Question~\ref{Conjecture_EFRS} when $k$ is even.
    When $k=3$, Question~\ref{Conjecture_EFRS} is just Harary's original conjecture which is true by Theorem~\ref{Theorem_triangle}.
    The case $k=5$ was resolved by Jayawardene~\cite{jayawardene1999thesis}. 
    In this paper, we settle all remaining cases.
    
    \begin{theorem}\label{theorem:main}
    For each odd $k\ge 7$, all graphs~$H$ with no isolated vertices satisfy $R(C_k, H)\le 2e(H)+\left \lfloor \frac{k-1}{2} \right \rfloor $, provided that $e(H)$ is sufficiently large with respect to~$k$.
    \end{theorem}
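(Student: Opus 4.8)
Throughout, write $m=e(H)$, set $N=2m+\frac{k-1}{2}$ (an integer since $k$ is odd), and suppose for contradiction that the edges of $K_N$ are $2$-coloured with no red $C_k$ and no blue copy of $H$; let $G$ be the red graph, so the blue graph $\overline G$ does not contain $H$. My plan is to extract from the absence of a blue $H$ enough structure in $\overline G$ to exhibit a red $C_k$. The soft first step is a greedy/deficiency argument: if one embeds $H$ into $\overline G$ one component at a time and, within each component, in a breadth-first order from a root, then failure to embed a vertex $v$ whose already-embedded neighbours have images $u_1,\dots,u_d$ happens only if every unused vertex lies outside $N_{\overline G}(u_1)\cap\dots\cap N_{\overline G}(u_d)$; since there are at least $N-|V(H)|-\sum_i\deg_G(u_i)$ admissible vertices and $|V(H)|\le m+(\#\text{ components of }H)\le 2m=N-\frac{k-1}{2}$, the embedding only fails if at some step $\sum_i\deg_G(u_i)>\frac{k-1}{2}+\big(2m-|V(H)|\big)$. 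The crucial local lever for controlling these red-degrees is: if $\deg_G(v)\ge k-1$ then $G[N_G(v)]$ contains no path on $k-1$ vertices (such a path together with $v$ would be a red $C_k$), so by Erd\H{o}s--Gallai $G[N_G(v)]$ has at most $\tfrac{k-3}{2}\deg_G(v)$ edges and chromatic number at most $k-2$. Pooling this over the vertices of large red-degree — or, when $H$ is close to a perfect matching, over the Gallai--Edmonds structure of $\overline G$ — one is pushed to the conclusion that $G$ contains a rich, structured subgraph: a clique, a large complete multipartite subgraph, or a long cycle carrying many chords.

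The heart of the argument is then to realise a cycle of length exactly $k$ inside whichever rich configuration has been produced. If $G$ contains $K_k$ we are done at once. If $G$ contains a complete multipartite subgraph on at least $k$ vertices with at least $\frac{k+1}{2}$ nonempty parts, none larger than the sum of the others, then this subgraph is Hamiltonian and in fact pancyclic (Hamiltonian complete multipartite graphs are pancyclic apart from the complete bipartite graphs $K_{t,t}$ and a few tiny exceptions, all ruled out here by having $\frac{k+1}{2}\ge 4$ parts), so it contains $C_k$. If instead $G$ carries a long cycle $C$ together with enough chords joining vertices at bounded distance along $C$, a chord-and-rotation argument trims a long cycle down to length exactly $k$, repeatedly exchanging an arc of $C$ for a chord or a short detour so as to change the cycle length by controlled amounts; the parity bookkeeping here is exactly what makes odd $k$ delicate. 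These deductions rest on two classical inputs: $\mathrm{ex}(n,C_k)=\lfloor n^2/4\rfloor$ for odd $k$ and $n$ large together with its stability version (a $C_k$-free graph with nearly $\lfloor n^2/4\rfloor$ edges is close to bipartite), and Bondy-type circumference bounds. Combining, every configuration from the first step produces a red $C_k$, the desired contradiction — provided the configuration is large enough, which is where $k\ge 7$ (and $m$ large) is needed to have room for the cycle surgery.

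It remains to tune constants so that the absence of a blue $H$ really forces a rich enough configuration, and this is where the exact value $\frac{k-1}{2}$ appears. The extremal colouring — a blue clique $K_{2m-1}$ and all other edges red — has red graph $K_{(k-1)/2}+\overline{K_{2m-1}}$, which avoids $C_k$ only because deleting its $\frac{k-1}{2}$ high-degree vertices leaves an independent set; this forces the exceptional/structured vertex set $W$ in our analysis to have size at most about $\frac{k-1}{2}$, and for $H=M_m$ (the $m$-edge matching) it genuinely has that size. Concretely, if $G-W$ is bipartite with parts $A,B$ and $|W|\le\frac{k-3}{2}$, then $\overline G$ contains disjoint blue cliques on $A$ and on $B$ with $|A|+|B|=N-|W|\ge 2m+1$, and since the components of $H$ have total order at most $2m$ one can split them between the two cliques; the slack of at least $3$ available because $k\ge 7$ is odd absorbs the rounding in this packing and leaves a few buffer vertices in each clique to absorb the at most $|W|$ vertices a copy might be forced through and the few non-blue edges inside $A$ or $B$. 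This contradicts $\overline G\not\supseteq H$, and a parallel (easier) embedding handles the case in which $\overline G$ is instead almost complete off a bounded set, using that a vertex of $H$ of degree $\Delta$ forces $|V(H)|\le 2m+1-\Delta$, so the greedy budget grows with $\Delta$ and offsets the cost of $H$'s high-degree vertices.

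The main obstacle is the second step: producing a cycle of length \emph{exactly} $k$, with $k$ odd, inside a $C_k$-free and hence a priori constrained red graph. Forbidding one fixed odd cycle length is far weaker than the Erd\H{o}s--Gallai/Kopylov hypothesis of forbidding all long cycles, so those results cannot be quoted; instead one must show by hand that a $C_k$-free red graph which is ``rich'' (dense off a small vertex set, or carrying a long highly-chorded cycle) is essentially bipartite, because any genuine deviation from bipartiteness can be amplified through the dense part into a cycle of length exactly $k$. This is precisely where the odd case is harder than the even case settled by Erd\H{o}s, Faudree, Rousseau and Schelp, and why $k\ge 7$ is required. A secondary obstacle is the exact tightness of the $\frac{k-1}{2}$ term at $H=M_m$, which forces the packing step to be essentially waste-free and tightly couples the size of $W$ to the component structure of $H$.
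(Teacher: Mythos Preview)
Your proposal is not a proof but a strategy outline, and its central step is missing. You yourself identify the ``main obstacle'': given that the red graph is $C_k$-free and has been forced (by failure to embed $H$ in blue) to be ``rich'' in some vague sense, you must extract a cycle of length \emph{exactly} $k$. You never carry this out. The passage ``pooling this over the vertices of large red-degree \dots\ one is pushed to the conclusion that $G$ contains a rich, structured subgraph'' is not an argument; no lemma is stated, no density threshold is computed, and the trichotomy (clique / complete multipartite / long chorded cycle) is asserted rather than derived. Likewise, the cycle-surgery step (``a chord-and-rotation argument trims a long cycle down to length exactly $k$'') is a wish, not a proof: forbidding a single odd cycle length is genuinely weak, and you offer no mechanism to control parity during the trimming. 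The discussion of the extremal colouring and the packing of $H$ into two blue cliques when $G-W$ is bipartite is more concrete, but it presupposes a structure theorem for $C_k$-free dense graphs that you have not established in the regime you need.

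The paper avoids this obstacle entirely by a different architecture. It never attempts to classify the global structure of the red graph or to find $C_k$ by surgery. Instead it proves by induction a slightly stronger bound $R(C_k,H)\le 2e(H)+\max\{B-\lfloor\sqrt{e(H)}\rfloor,\lfloor k/2\rfloor\}$, and the inductive step works as follows: find a vertex $u$ with large red neighbourhood $U_1$ (via a minimum-degree deletion in $H$, as you also do); observe that $G[U_1]$ has no red $P_{k-1}$; crucially, prove that the \emph{second} red neighbourhood $\Pi$ of $u$ has no red $P_{2k}$ (a short direct lemma); and then use the estimate $R(P_k,H)\le |H|+k\sqrt{2e(H)}$ to embed a large induced piece $H[V_1]$ of $H$ in blue inside $U_1$, embed the remainder $H[V_2]$ by induction inside $U_2:=V(G)\setminus(\{u\}\cup U_1\cup\Pi)$, and note that all $U_1$--$U_2$ edges are blue by definition of $\Pi$. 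The two ingredients you are missing --- the path-vs-$H$ Ramsey bound and the second-neighbourhood lemma --- replace the entire stability/surgery programme you sketch.
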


    Like the earlier proofs in this area by Sidorenko, Goddard and Kleitman, Erd\H{o}s, Faudree, Rosseau and Schelp, and Jayawardene, our proof is an induction. 
    We prove a more general upper bound (Theorem~\ref{thm:main+}) that holds for all values of $e(H)$ and equals $2e(H)+\floorfrac {k-1}2 $ for $e(H)$ sufficiently large. 
    The main proof is presented in~\cref{sec:mainproof}, and depends on multiple preliminary results.
    The latter are presented in~\cref{sec:prelim}. 
    Our core ideas depend on an estimate for $R(P_k,H)$ (Corollary~\ref{cor:estR(Pk,H)}), where~$P_k$ denotes the path on $k$ vertices, and on proving that if either the first or second neighbourhood of a vertex contains a copy of~$P_{2k}$ then one can find a copy of $C_k$ (Lemma~\ref{lemma:path_second_neighbourhood}).

    \smallskip
    
    \textbf{Notation and terminology.}
    For a graph $G$, we write $V(G)$ for the vertex set of $G$, $|G|$ for the number of vertices in $G$, and $e(G)$ for the number of edges in $G$.
    For a vertex $v \in V(G)$, we let $N_G(v)$ denote the neighbourhood of $v$ in $G$.
    For a set $S \subseteq V(G)$ we define the neighbourhood of $S$ to be the set $\{u \in V(G) \setminus S: \exists v \in S \text{ with }  uv \in E(G)\}$.
    The second neighbourhood of a vertex $v \in V(G)$ is the neighbourhood of $N(v)$ minus $v$ itself.
    If $G$ is red-blue coloured, we define the (second) red neighbourhood of $v \in V(G)$ to be the (second) neighbourhood in the subgraph of $G$ defined by the red edges of $G$.
    We similarly define the (second) blue neighbourhood.
    For any~$k\in\mathbb Z^+$, we let $[k]:=\{1,2,\dots,k\}$ denote the set of the first~$k$ positive integers.



\section{Preliminary results}\label{sec:prelim}


In this section, we list and prove some of the preliminary results which we need for the main proof in~\cref{sec:mainproof}.
The following two results can be found in \cite[Thm.~1]{EFRS78} and \cite[Thm.~1.3]{FL25} respectively, and will allow us to handle the cases where the graph~$H$ is either very dense or connected and sparse.

\begin{proposition}[\cite{EFRS78, Sudakov02}]
\label{prop:dense}
    For every $n\ge2$ and odd $k\ge3$, we have $R(C_k, K_n) \le (3k)\cdot n^{(k+1)/(k-1)}$.
\end{proposition}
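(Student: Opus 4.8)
The plan is to translate the statement into a purely graph-theoretic claim about the red colour class and then argue locally. Assume $K_N$ with $N\ge 3k\,n^{(k+1)/(k-1)}$ is red/blue coloured with no blue $K_n$; writing $G$ for the red graph, this means $\alpha(G)<n$, and we must produce a red $C_k$. The fundamental local observation is that \emph{a vertex of red-degree at least $(k-2)n$ finishes the proof}: letting $A=N_G(v)$, if $G[A]$ contained a red path on $k-1$ vertices then that path together with $v$ would close up to a red $C_k$, so $G[A]$ is $P_{k-1}$-free; by the Erd\H{o}s--Gallai path theorem $e(G[A])\le\tfrac{k-3}{2}|A|$, hence $G[A]$, and each of its induced subgraphs, has a vertex of red-degree at most $k-3$, and deleting such vertices one at a time greedily yields a red-independent set of size at least $|A|/(k-2)\ge n$, i.e.\ a blue $K_n$. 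Meanwhile, since $\alpha(G)<n$, a maximal red-independent set $I$ has $|I|\le n-1$ and dominates $G$, so some $u\in I$ has red-degree at least $(N-|I|)/|I|\ge N/n-1$. When $k=3$ this already does it, since then $N/n-1\ge 9n-1\ge n=(k-2)n$, so the above produces a blue $K_n$; thus the case $k=3$ of the proposition (a weak form of the classical bound for $R(K_3,K_n)$) is complete.

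For odd $k\ge5$ the argument so far only gives $R(C_k,K_n)=O(kn^2)$, whereas the target exponent is $\tfrac{k+1}{k-1}=1+\tfrac{2}{k-1}<2$; improving the exponent is the crux, and here I would follow the reasoning of the references cited in the statement. The bound $|A|<(k-2)n$ above is only close to tight when the red neighbourhood $A$ of a maximum-degree vertex $v$ is (essentially) a disjoint union of red cliques of size $k-2$ --- these being precisely the $P_{k-1}$-free graphs with small independence number. Crucially, such copies cannot pile up: if $K$ is a red $K_{k-2}$ inside $N_G(v)$ that has a further common red neighbour $u'\neq v$, then $\{v,u'\}\cup V(K)$ spans at least $K_k$ minus the single edge $vu'$, and $K_k$ minus an edge still contains a red Hamilton cycle $C_k$ for $k\ge4$ --- contradiction. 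So no red $K_{k-2}$ sitting inside a red neighbourhood may acquire a second common red neighbour, a strong global constraint. Exploiting it --- by counting common neighbourhoods, again using that every $n$-set spans a red edge, and running the Erd\H{o}s--Gallai estimate along a breadth-first search from a maximum-degree vertex while using that a red $C_k$ is \emph{odd} (so the breadth-first layers it passes through cannot all be red-independent) --- one should either locate a red $C_k$ directly or push the vertex count down to $O(kn^{1+2/(k-1)})$.

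The main obstacle is exactly this last step. A $C_k$-free graph with $\alpha<n$ need not be globally sparse --- for instance a blow-up of a long odd cycle is dense, $C_k$-free and far from bipartite --- so one cannot simply quote an even-cycle Tur{\'a}n bound. The exponent $1+\tfrac{2}{k-1}$ we want is the Tur{\'a}n exponent of the \emph{even} cycle $C_{k-1}$ (that is, of $\mathrm{ex}(N,C_{k-1})$), which strongly hints that the proof must, in one regime, manufacture a long red $C_{k-1}$ from a Moore-type expansion of the breadth-first layers (which in turn needs short red cycles to be ruled out), while in the complementary regime, where a short red odd cycle $C_{2j+1}$ is already present, it must extend that cycle to a red $C_k$ by a suitably long ear (using that a shortest odd cycle is induced and only sparsely attached to $G$). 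Reconciling these two regimes, and bookkeeping the constants tightly enough that the final leading constant is $3k$ rather than the much larger constant that an off-the-shelf Bondy--Simonovits bound would force, is the delicate part.
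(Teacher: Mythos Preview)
The paper does not prove this proposition at all: it is quoted from the literature (the sentence immediately preceding it reads ``The following two results can be found in \cite[Thm.~1]{EFRS78} and \cite[Thm.~1.3]{FL25} respectively''), and is used only as a black box in the base case and in the dense regime of Claim~\ref{claim:connected+dense}. So there is no ``paper's own proof'' to compare your attempt against.

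As a standalone proof, your proposal is incomplete, and you say so yourself. The local argument that a vertex of red degree $\ge (k-2)n$ yields a blue $K_n$ is correct and gives the $k=3$ case cleanly, but for odd $k\ge 5$ everything after ``improving the exponent is the crux'' is a heuristic outline rather than a proof: you never actually derive the exponent $(k+1)/(k-1)$, and the two-regime programme you describe (manufacture a red $C_{k-1}$ via Moore-type expansion, or extend a shortest red odd cycle by an ear) is not carried out. The observation that a red $K_{k-2}$ inside some $N_G(v)$ cannot have a second common red neighbour is nice but does not by itself control the vertex count down to $O(k\,n^{1+2/(k-1)})$. In short, the gap is precisely the step you flag as ``the main obstacle''.

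If you want to fill this in, the references cited in the statement are the right place to look: Erd\H{o}s--Faudree--Rousseau--Schelp~\cite{EFRS78} for the original bound and Sudakov~\cite{Sudakov02} for a short argument with the explicit constant. For the purposes of the present paper, though, no proof is needed --- the proposition is simply invoked as a known result.
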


\begin{lemma}[\cite{BEFRS82,FL25}]
\label{lemma:sparse_connected}
For every odd $k\ge3$, if $H$ is a connected graph on $n\ge(10k)^4$ vertices with average degree at most $2(1+(8k)^{-2})$ then $R(C_k,H)= 2n-1$.
\end{lemma}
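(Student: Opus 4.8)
The lower bound $R(C_k,H)\ge 2n-1$ is witnessed by the colouring of $K_{2n-2}$ whose blue graph is a disjoint union of two cliques on $n-1$ vertices: the red graph is then bipartite, hence contains no odd cycle and in particular no $C_k$; and each blue component has only $n-1$ vertices, so there is no blue $H$. For the upper bound I would fix a red/blue colouring of $K_N$ with $N=2n-1$, assume there is no red $C_k$, and produce a blue copy of $H$, splitting the argument into a structural phase and an embedding phase.

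For the structural phase, first note that if every vertex had red degree at least $n$, then $\delta_{\mathrm{red}}>N/2$ and, since $N$ is odd, Bondy's pancyclicity theorem forces the red graph to be pancyclic (it cannot be $K_{N/2,N/2}$), producing a red $C_k$ as $3\le k\le N$, a contradiction. So some vertex has blue degree at least $n-1$, and the blue component $D$ containing it has $|D|\ge n$. As $\mathrm{red}[D]$ is $C_k$-free it has at most $\lfloor|D|^2/4\rfloor$ edges (the extremal number of a long odd cycle, valid since $|D|\ge n\ge(10k)^4$), so $\mathrm{blue}[D]$ is dense. I would then run a stability argument for odd cycles: either $\mathrm{blue}[D]$ already contains a near-complete subgraph on roughly $n$ vertices, or $\mathrm{red}[D]$ is close to a complete bipartite graph $K_{A,B}$, in which case $\mathrm{blue}[D]$ is close to $K_A\cup K_B$ joined by the few blue bridges that its connectivity forces. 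In the latter case $\mathrm{red}[D]$ is (almost) bipartite, so one must look elsewhere for a red $C_k$; and indeed, if $D\ne V(K_N)$, any vertex outside $D$ is red-adjacent to all of $D$ and, together with the near-complete bipartite $\mathrm{red}[D]$, spans red odd cycles of every length up to about $2\min(|A|,|B|)+1$, which forces $\min(|A|,|B|)<(k-1)/2$. Threading this through, one reduces to a short list of favourable blue configurations: essentially a near-complete graph on at least $n$ vertices, or two near-complete graphs of total order at least $n$ joined by at least one blue edge (the smaller of order $<k/2$), or all of $K_N$ with the blue components dictated by a bipartition, in which case one side-clique has order at least $n$.

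For the embedding phase I would place $H$ into each configuration. Since $H$ has $n$ vertices and only $e(H)-(n-1)\le 1+n/(64k^2)$ edges above a spanning tree, in the single near-clique case a greedy embedding that processes $V(H)$ in BFS order of a spanning tree and routes the few surplus edges into the near-complete part succeeds, because at each step the relevant common blue-neighbourhood is still essentially all of the host. In the two-near-cliques case one splits $H$ along a small separating structure (or a bridge edge of a spanning tree) and embeds the two sides into the two near-cliques, using a blue joining edge for the link; the sparsity of $H$ guarantees such a small separator exists unless $H$ is highly connected, in which case $H$ fits into the larger near-clique outright. The surplus-edge budget $n/(64k^2)$ is small enough that the few missing blue edges inside the ``near-complete'' parts never obstruct these embeddings.

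The step I expect to be the main obstacle is the structural phase, and specifically the stability analysis of the ``red close to bipartite'' regime: the embedding tolerance for $H$ is only of order $n/k^2$, so one must pin down $\min(|A|,|B|)$, the number of blue bridges between the near-cliques, and the number of vertices outside $D$ precisely enough either to locate a red $C_k$ or to guarantee one of the favourable blue configurations. This is exactly where the hypotheses $n\ge(10k)^4$ and $e(H)\le n(1+(8k)^{-2})$ are used; by contrast, once a favourable blue configuration is in hand, the embedding is routine.
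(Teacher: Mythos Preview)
The paper does not give a proof of this lemma at all: it is stated with citations to \cite{BEFRS82,FL25} and used as a black box, so there is no in-paper argument to compare your proposal against.

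That said, your outline is the standard route for results of this type and matches what is done in the cited sources. The lower bound construction is exactly right. For the upper bound, the opening move (Bondy pancyclicity to force a vertex of blue degree at least $n-1$, hence a blue component $D$ of order at least $n$) is correct and is how the cited proofs begin. The subsequent dichotomy via odd-cycle stability on $\mathrm{red}[D]$, and the greedy embedding of a near-tree $H$ into the resulting near-complete blue structures, is likewise the approach in \cite{FL25}. You have also correctly identified the genuine difficulty: controlling the ``red close to bipartite'' regime precisely enough, with the $(8k)^{-2}$ slack in the average-degree hypothesis calibrated to absorb the missing blue edges during embedding. One small correction: in your two-near-cliques case you cannot always find a small separator in $H$ just from sparsity (a long cycle is $2$-regular but has no small separator in the sense you need); the actual argument there uses that one of the two cliques is tiny (order below $k/2$), so only a bounded number of vertices of $H$ need to go there, and one picks those vertices directly rather than via a separator.
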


As mentioned above, the main ingredient of our proof is a good estimate for $R(P_k,H)$, given by the following lemma.
\begin{lemma}
\label{lemma:ramsey_of_path_vs_any}
For all integers $k \geq 1$ and every graph $H$, we have $R(P_k, H) \leq |H| + k (\chi(H)-1)$.
\end{lemma}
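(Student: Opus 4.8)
We want to show $R(P_k, H) \le |H| + k(\chi(H)-1)$. Let $\chi := \chi(H)$ and set $N := |H| + k(\chi-1)$. Take any red-blue colouring of $E(K_N)$ with no red $P_k$; the plan is to build a blue copy of $H$ greedily. The key classical fact I would invoke is that a graph with no path on $k$ vertices has a proper colouring with at most $k-1$ colours — more usefully, by the Gallai–Roy theorem (or directly by the Erdős–Gallai bound plus a greedy argument), if the red graph $R$ on $K_N$ has no $P_k$, then $R$ has an acyclic orientation with all directed paths of length $< k$, hence $\chi(R) \le k-1$. I will use this to partition $V(K_N)$ into $k-1$ classes $V_1, \dots, V_{k-1}$, each of which is a blue clique (an independent set in $R$).

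The main step is then an embedding argument. Since $H$ is $\chi$-chromatic, fix a proper colouring of $V(H)$ with colour classes $A_1, \dots, A_\chi$. I will embed $H$ into the blue graph one vertex at a time, processing the vertices of $H$ in an order that goes class-by-class: first all of $A_1$, then all of $A_2$, and so on. When embedding a vertex $v \in A_j$, it has already-embedded neighbours only among $A_1, \dots, A_{j-1}$. The invariant I maintain is that the image of each class $A_i$ lies entirely inside a single blue clique $V_{\pi(i)}$ of the red-proper-colouring partition, where $\pi$ is an injection from $[\chi]$ into $[k-1]$ chosen greedily as we go. Thus when we come to embed the first vertex of a new class $A_j$, we must pick a fresh blue clique $V_{\pi(j)}$ not yet used, and then every subsequent vertex of $A_j$ goes into that same clique. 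Because vertices within $A_j$ map into a common blue clique, any edge of $H$ inside $A_j$ — there are none, since $A_j$ is independent — is not an issue; and edges between $A_j$ and earlier classes are automatically blue, because distinct $V_i$'s could have red edges between them, so this is exactly the point that needs care.

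To fix that gap, I would instead not use a vertex colouring of $R$ but rather argue more carefully: order $V(H)$ as $v_1, \dots, v_{|H|}$ respecting the colour classes, and embed $v_1, \dots, v_{|H|}$ greedily into $K_N$, maintaining that the partial embedding $\phi$ is a blue embedding. When embedding $v_t \in A_j$: its embedded neighbours $\phi(v_s)$ ($s<t$, $v_s \sim v_t$) all lie in classes $A_1,\dots,A_{j-1}$, whose images we have *not* constrained to lie in blue cliques. Let me reconsider: the cleanest route is to reserve, for each colour class $A_j$ of $H$, a block of $|A_j|$ "new" vertices plus a buffer, and use the no-red-$P_k$ condition to route within blocks. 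Concretely: process classes $A_1, \dots, A_\chi$ in order; having embedded $A_1 \cup \dots \cup A_{j-1}$ using some set $U$ of $|A_1| + \dots + |A_{j-1}|$ vertices, we must embed $A_j$. Consider the remaining $N - |U| \ge |A_j| + k(\chi - j)$ vertices. We need to embed $|A_j|$ independent vertices so that each $v \in A_j$ is blue-joined to all of $\phi(N_H(v)) \subseteq \phi(U)$. This is where the main obstacle lies, and it is handled by the following observation: among the remaining vertices, since there is no red $P_k$, one can find a large set that is blue-complete to $\phi(U)$ — indeed, for a single vertex $u$, its red neighbourhood in the remaining set, if it contained many vertices, one pushes towards a red $P_k$.

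I would therefore structure the final write-up around a single clean claim: \emph{if $K_M$ is red-blue coloured with no red $P_k$, then for any set $S$ of already-chosen vertices, at least $M - |S| - k$ of the remaining vertices are blue-complete to... } — no, that is false in general. The honest main obstacle, and the thing I expect the real proof does, is to induct on $\chi(H)$: remove a colour class $A$ of $H$ with $|H| - |A|$ vertices remaining, apply induction to $H - A$ to embed it in blue inside a sub-clique on $|H - A| + k(\chi-2)$ vertices, and then use the remaining $\ge |A| + k$ vertices together with the no-red-$P_k$ property to find an independent blue set of size $|A|$ blue-complete to the relevant part of the embedded $H - A$; the key sublemma is that in a red-$P_k$-free colouring of $K_m$, every vertex can be "attached" at a cost of at most $k$ extra vertices, formalised via the fact that a connected red graph with no $P_k$ has a dominating set structure / bounded "radius". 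Getting this attachment cost to be exactly $k$ per colour class, uniformly, is the crux; everything else is bookkeeping on the vertex count $|H| + k(\chi-1)$.
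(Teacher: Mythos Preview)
Your proposal is not a proof but a sequence of sketches, each of which you yourself abandon. The first idea (Gallai--Roy gives $\chi(R)\le k-1$, hence a partition into $k-1$ blue cliques) is correct as far as it goes, and you correctly note why it does not finish: edges of $H$ run between colour classes, and between two blue cliques the edges may be red. The third idea---induct on $\chi(H)$, embed $H-A$ for a colour class $A$ using $|H-A|+k(\chi-2)$ vertices, then ``attach'' $A$ using the remaining $|A|+k$ vertices---has a genuine gap at exactly the point you flag: once $H-A$ is embedded, nothing forces the leftover $|A|+k$ vertices to be blue-adjacent to the relevant images in the embedded $H-A$. The ``attachment cost $k$ per colour class'' sublemma you gesture at is not proved, and it is essentially the entire content of the lemma; you have relocated the difficulty rather than resolved it.

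The paper sidesteps this obstacle by running the induction in the other direction. It proves the multipartite statement $R(P_k, K_{n_1,\dots,n_t})\le k(t-1)+\sum_i n_i$ by induction on $t$, using H\"aggkvist's bound $R(P_k,K_{a,b})\le a+b+k-2$ as both the base case and the inductive engine. To find a blue $K_{n_1,\dots,n_t}$, one first uses the inductive hypothesis (with $t-1$ parts) to find a blue $K_{n_1+n_2+k,\,n_3,\dots,n_t}$, and then applies H\"aggkvist \emph{inside the fat first part} to split it into a blue $K_{n_1,n_2}$. Because this splitting happens inside a part that is already blue-complete to all the other parts, the attachment problem you ran into simply does not arise. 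The ``merge two parts, recurse, then split'' trick is precisely the idea your write-up is missing.
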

\begin{proof}
    If $\chi(H)=1$, this is trivial. So we assume $\chi(H) \ge 2.$
    Next, we will prove by induction on $t$ that for every choice of $k,n_1,n_2, \ldots, n_t$ we have
    \begin{equation}\label{eq:RamseyPathMultipartite}
        R(P_k, K_{n_1, n_2, \ldots, n_t}) \le k(t-1)+ \sum_{i=1}^t n_i
    \end{equation}
    where $K_{n_1, n_2, \ldots, n_t}$ denotes the complete $t$-partite graph with parts of size $n_1,\dots,n_t$.
    Note that~\eqref{eq:RamseyPathMultipartite} implies the statement of the lemma, since~$H$ is a subgraph of $K_{n_1, n_2, \ldots, n_t}$ where $t= \chi(H)$ and the $n_i$ are the sizes of the colour classes in a $t$-colouring of $H$.
   H\"aggkvist~\cite{Haggkvist89} proved that $R(P_k, K_{n_1, n_2})\le k+n_1+n_2-2$, and so~\eqref{eq:RamseyPathMultipartite} holds if~$t=2$.
   Hence, we may assume that~$t\ge3$.
   Now, given a blue copy of~$K_{n_1+n_2+k, n_3, \ldots, n_t}$ within a red-blue coloured complete graph,
   we can apply H\"aggkvist's result to the graph spanned by the part of order~$n_1+n_2+k$ and find either a red copy of~$P_k$ or a blue copy of~$K_{n_1,n_2, n_3, \ldots, n_t}$.
   By this observation, and by the inductive hypothesis, we have
   $$R(P_k, K_{n_1, n_2, \ldots, n_t}) \le R(P_k, K_{n_1+n_2+k, n_3, \ldots, n_t})\le k(t-2)+(n_1+n_2+k)+\sum_{i=3}^t n_i,$$
   as desired.
\end{proof}

Combining Lemma~\ref{lemma:ramsey_of_path_vs_any} with the well-known fact that $e(H) \ge \binom{\chi(H)}{2} $, we obtain the following corollary.

\begin{corollary}\label{cor:estR(Pk,H)}
For all integers $k \geq 1$ and every graph $H$, we have $R(P_k, H) \leq |H| + k \sqrt{2e(H)}$.
\end{corollary}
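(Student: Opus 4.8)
The plan is to combine Lemma~\ref{lemma:ramsey_of_path_vs_any}, which gives $R(P_k,H) \le |H| + k(\chi(H)-1)$, with a lower bound on $e(H)$ in terms of $\chi(H)$. It therefore suffices to prove the inequality $\chi(H)-1 \le \sqrt{2e(H)}$, since plugging this into the lemma immediately yields $R(P_k,H) \le |H| + k(\chi(H)-1) \le |H| + k\sqrt{2e(H)}$, which is exactly the claimed bound.

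To establish $\chi(H)-1 \le \sqrt{2e(H)}$, I would first invoke the classical fact that any graph with chromatic number $\chi$ has at least $\binom{\chi}{2}$ edges. The quickest justification: pass to a subgraph $H'$ of $H$ that is minimal under vertex deletion subject to $\chi(H') = \chi(H)$. Then $H'$ is vertex-critical, so every vertex of $H'$ has degree at least $\chi(H')-1 = \chi(H)-1$ in $H'$, and $H'$ has at least $\chi(H)$ vertices; summing degrees gives $e(H) \ge e(H') \ge \tfrac{1}{2}\chi(H)\bigl(\chi(H)-1\bigr) = \binom{\chi(H)}{2}$. (The degenerate case $\chi(H)\le 1$, i.e.\ $H$ has no edges, is trivial.)

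From $e(H) \ge \binom{\chi(H)}{2}$ we get $2e(H) \ge \chi(H)\bigl(\chi(H)-1\bigr) \ge \bigl(\chi(H)-1\bigr)^2$, and taking square roots gives $\chi(H)-1 \le \sqrt{2e(H)}$, as needed. There is essentially no genuine obstacle in this argument; the only step meriting a line of care is the edge bound $e(H) \ge \binom{\chi(H)}{2}$, which is standard and follows from vertex-criticality as sketched above.
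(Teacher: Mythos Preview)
Your proposal is correct and follows exactly the paper's approach: combine Lemma~\ref{lemma:ramsey_of_path_vs_any} with the well-known bound $e(H)\ge\binom{\chi(H)}{2}$ to deduce $\chi(H)-1\le\sqrt{2e(H)}$. The paper simply cites this edge bound as standard, whereas you additionally supply the vertex-criticality justification, but the argument is otherwise identical.
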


We will want to use Corollary~\ref{cor:estR(Pk,H)} to find $H$, or a subgraph of $H$, within the first and second neighbourhoods of some suitable vertex in a graph $G$.
It is easy to see that if $G$ is $C_k$-free, then the subgraph of $G$ induced by the neighbourhood of any vertex is $P_k$-free.
The following lemma states that a similar statement 
is true for the second neighbourhood of any vertex.
 
\begin{lemma}
\label{lemma:path_second_neighbourhood}
Let $k\ge5$. 
For any graph $G$ and vertex $v\in V(G)$, if the second neighbourhood of $v$ contains a copy of $P_{2k}$ then $G$ contains a copy of $C_k$.
\end{lemma}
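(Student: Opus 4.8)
The plan is to argue by contradiction. Suppose $G$ contains no copy of $C_k$, let $P=u_1u_2\cdots u_{2k}$ be a copy of $P_{2k}$ inside the second neighbourhood of $v$, and set $W:=N_G(v)$. The sets $\{v\}$, $W$ and $V(P)$ are pairwise disjoint, and since each $u_i$ lies in the second neighbourhood of $v$ it has at least one neighbour in $W$. I would first record the two obvious ways to build a $C_k$ from a sub-path of $P$ together with vertices of $W\cup\{v\}$: \textbf{(Type~I)} if some $w\in W$ is adjacent to both $u_i$ and $u_{i+k-2}$ (with $i+k-2\le 2k$), then $u_iu_{i+1}\cdots u_{i+k-2}\,w\,u_i$ is a copy of $C_k$; \textbf{(Type~II)} if $w\ne w'$ in $W$ satisfy $w\sim u_i$ and $w'\sim u_{i+k-4}$ (with $i+k-4\le 2k$), then $u_iu_{i+1}\cdots u_{i+k-4}\,w'\,v\,w\,u_i$ is a copy of $C_k$. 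In each case one checks the vertex count is exactly $k$ and all vertices are distinct using the disjointness noted above. Hence under our assumption no configuration of either type occurs.

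Next I would extract structure from the absence of Type~II configurations. If some $u_i$ had two distinct neighbours in $W$, then — pairing $u_i$ with $u_{i+k-4}$ or with $u_{i-(k-4)}$, whichever index is legitimate — one could always select \emph{distinct} $W$-neighbours at the two endpoints, yielding a Type~II configuration. So every $u_i$ has a \emph{unique} neighbour $g(i)\in W$; and, once more by the absence of Type~II configurations, $g(i)=g(i+k-4)$ whenever $i$ and $i+k-4$ lie in $\{1,\dots,2k\}$, so $g(i)$ depends only on the residue of $i$ modulo $k-4$. The absence of Type~I configurations gives $g(i)\ne g(i+k-2)$ for all $i\le k+2$; since $k-2\equiv 2\pmod{k-4}$, propagating this through the periodicity of $g$ (and tracking index ranges) upgrades it to $g(i)\ne g(i+2)$ for every $1\le i\le 2k-2$. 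When $k\in\{5,6\}$ the modulus $k-4$ divides $2$, so periodicity forces $g(1)=g(3)$, contradicting $g(1)\ne g(3)$; thus from here on I may assume $k\ge 7$.

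Finally, for $k\ge 7$ I would build a ``two-jump'' cycle. Put $w_1:=g(1)$ and $w_2:=g(3)$, which are distinct by the previous step. By the periodicity of $g$ we have $g(k-1)=g(3)=w_2$ and $g(2k-7)=g(1)=w_1$ (note $k-1\equiv 3$ and $2k-7\equiv 1$ modulo $k-4$, and $k-1,2k-7\in\{1,\dots,2k\}$), so $u_1\sim w_1$, $u_3\sim w_2$, $u_{k-1}\sim w_2$ and $u_{2k-7}\sim w_1$. Then
\[
w_1\,u_1\,u_2\,u_3\,w_2\,u_{k-1}\,u_k\cdots u_{2k-7}\,w_1
\]
is a cycle, consisting of the sub-path $u_1u_2u_3$ ($2$ edges), the sub-path $u_{k-1}\cdots u_{2k-7}$ ($k-6$ edges), and the four edges $u_3w_2$, $w_2u_{k-1}$, $u_{2k-7}w_1$, $w_1u_1$, for a total of $k$ edges and $3+(k-5)+2=k$ vertices. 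These vertices are genuinely distinct: $w_1\ne w_2$ lie outside $V(P)$, and $\{1,2,3\}$ is disjoint from $\{k-1,\dots,2k-7\}$ since $k\ge 7$ (and $2k-7\le 2k$, so all indices are legitimate). This copy of $C_k$ contradicts our assumption and completes the proof.

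I expect the main obstacle to be the bookkeeping in the middle paragraph: arguing cleanly that the absence of Type~II cycles makes $g$ well defined and $(k-4)$-periodic, and then carefully following index ranges while boosting $g(i)\ne g(i+k-2)$ to $g(i)\ne g(i+2)$ on the whole relevant interval. Once that structure is in hand the cycle in the last paragraph is essentially forced, and the only care required there is the (routine) index arithmetic — which is why having a path on $2k$ vertices, comfortably more than the $2k-7$ actually used, is convenient.
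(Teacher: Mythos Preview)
Your proof is correct and follows essentially the same strategy as the paper's: both exploit that, absent a $C_k$, the $W$-neighbours of path vertices at distance $k-4$ must coincide (your Type~II), and then build a $C_k$ from this $(k-4)$-periodic structure together with a Type~I--style observation. The paper is a bit leaner---it simply picks one arbitrary $W$-neighbour $u_i$ per $v_i$ rather than proving uniqueness and the full $g(i)\ne g(i+2)$ boost, and its explicit cycle uses slightly different path indices---but the underlying argument is the same.
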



\begin{proof}
    Let the vertices of the copy of $P_{2k}$ be, in order, $v_1, v_2, \ldots, v_{2k}$.
    For every $i\in[2k]$, let $u_i$ be an arbitrary neighbour of $v$ adjacent to $v_i$.

    For every $j\in[k]$,  if $u_j\neq u_{j+k-4}$ then $vu_jv_jv_{j+1} \ldots v_{j+k-4}u_{j+k-4}v$ is a copy of $C_k$, see Figure~\ref{fig1:a}.
    Hence, we may assume that $u_1=u_{k-3}=u_{2k-7}$, $u_2=u_{k-2}=u_{2k-6}$ and $u_3=u_{k-1}=u_{2k-5}$.
    If $u_1\neq u_2$ then $u_1v_1v_2u_2v_{k-2}v_{k-1} \ldots v_{2k-7}u_1$ is a copy of $C_k$, see Figure~\ref{fig1:b}.
    Similarly, if $u_2\neq u_3$ then $u_2v_2v_3u_3v_{k-1}v_{k} \ldots v_{2k-6}u_2$ is a copy of $C_k$.
    If $u_1=u_3$ then $u_1=u_{k-1}$ and in particular $u_1v_1v_{2} \ldots v_{k-1}u_1$ is a copy of $C_k$, see Figure~\ref{fig1:c}.
    This concludes the proof of the lemma.
\end{proof}
\begin{figure}[h!]   
    \centering
        \begin{subfigure}{.32\textwidth}
        \centering
        \begin{tikzpicture}[scale=0.94]
        \draw (1.5,3.3) -- (4,0) -- (-1,0) -- (1.5,3.3);
        \filldraw (1.5,3.3) circle (2pt) node[anchor=south]{$v$};
        \filldraw[fill=white] (1.5,2) ellipse (1.5cm and 0.7cm);
        \filldraw[fill=white] (1.5,0) ellipse (2.5cm and 1cm);
        \filldraw[very thick] (0,0) circle (2pt) node[anchor=north]{$v_j$} -- (1,0) circle (2pt) node[anchor=north]{$v_{j+1}$} -- (1.5,0);
        \filldraw[very thick,dotted] (1.5,0) -- (2.5,0);
        \filldraw[very thick] (3,0) circle (2pt) node[anchor=north]{$v_{j+k-4}$} -- (2.5,0);
        \filldraw[very thick] (0,0) -- (1,2) circle (2pt) node[anchor=east]{$u_j$} -- (1.5,3.3) circle (2pt) node[anchor=south]{$v$} -- (2,2) circle (2pt) node[anchor=west]{$u_{j+k-4}$} -- (3,0); 
        \end{tikzpicture}
        \caption{}
        \label{fig1:a}
        \end{subfigure}   
        \hfill
        \begin{subfigure}{.32\textwidth}
        \centering
        \begin{tikzpicture}[scale=0.94]
        \draw (1.5,3.3) -- (4,0) -- (-1,0) -- (1.5,3.3);
        \filldraw (1.5,3.3) circle (2pt) node[anchor=south]{$v$};
        \filldraw[fill=white] (1.5,2) ellipse (1.5cm and 0.7cm);
        \filldraw[fill=white] (1.5,0) ellipse (2.5cm and 1cm);
        \filldraw[very thick] (1,2) circle (2pt) node[anchor=east]{$u_{1}$} -- (-0.2,0) circle (2pt) node[anchor=north]{$v_1$} -- (0.8,0) circle (2pt) node[anchor=north]{$v_{2}$} -- (2,2) circle (2pt) node[anchor=west]{$u_{2}$} -- (1.8,0) circle (2pt) node[anchor=north]{$v_{k-2}$} -- (2.2,0);
        \filldraw[very thick,dotted] (2.2,0) -- (2.9,0);
        \filldraw[very thick] (2.9,0) -- (3.3,0) circle (2pt) node[anchor=north]{$v_{2k-7}$} -- (1,2);  
        \end{tikzpicture}
        \caption{}
        \label{fig1:b}
        \end{subfigure}
        \hfill
        \begin{subfigure}{.32\textwidth}
        \centering
        \begin{tikzpicture}[scale=0.94]
        \draw (1.5,3.3) -- (4,0) -- (-1,0) -- (1.5,3.3);
        \filldraw (1.5,3.3) circle (2pt) node[anchor=south]{$v$};
        \filldraw[fill=white] (1.5,2) ellipse (1.5cm and 0.7cm);
        \filldraw[fill=white] (1.5,0) ellipse (2.5cm and 1cm);
        \filldraw[very thick] (0,0) circle (2pt) node[anchor=north]{$v_1$} -- (1,0) circle (2pt) node[anchor=north]{$v_{2}$} -- (1.5,0);
        \filldraw[very thick,dotted] (1.5,0) -- (2.5,0);
        \filldraw[very thick] (3,0) circle (2pt) node[anchor=north]{$v_{k-1}$} -- (2.5,0);
        \filldraw[very thick] (0,0) -- (1.5,2) circle (2pt) node[anchor=south]{$u_1=u_{k-1}$} -- (3,0);   
        \end{tikzpicture}
        \caption{}
        \label{fig1:c}
        \end{subfigure} 
        \vspace*{-0.25cm}
        \caption{From left to right, the cases $u_j\neq u_{j+k-4}$, $u_1\neq u_2$ and $u_1=u_{k-1}$ in the proof of~\cref{lemma:path_second_neighbourhood}.}
        \label{fig1}
    \end{figure}
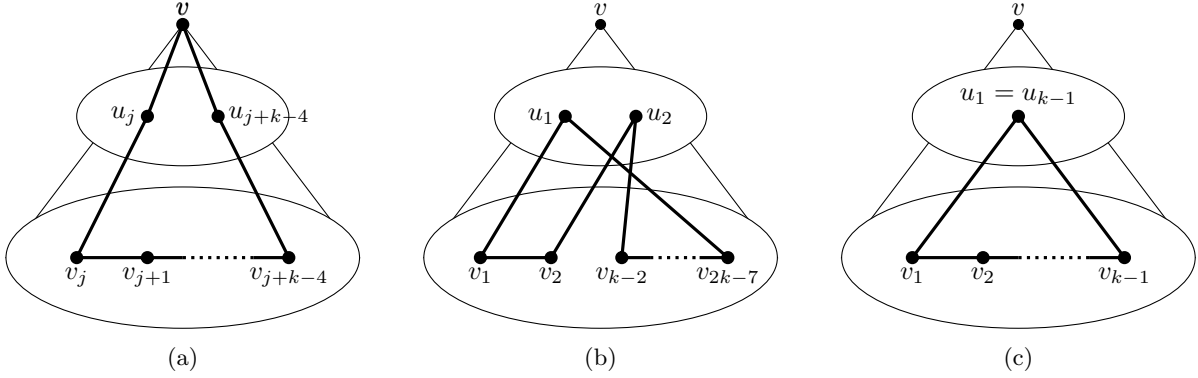

Finally, we need a separate argument for the case when $H$ is a matching.
We write~$mK_2$ for the matching consisting of~$m$ vertex-disjoint edges.

\begin{proposition}[Matching case]\label{proposition:matching}
    Let $m \ge k \geq 3$. 
    Then, $R(C_k,mK_2) = 2m + \floorfrac{k-1}{2} $.
\end{proposition}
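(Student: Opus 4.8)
The plan is to prove the two inequalities separately, the upper bound being the substantive part. For the lower bound $R(C_k,mK_2)\ge 2m+\floorfrac{k-1}{2}$ I would use the colouring already sketched in the introduction: on $2m-1+\floorfrac{k-1}{2}$ vertices, take a blue clique of order $2m-1$ and colour every other edge red. The blue graph $K_{2m-1}$ has no matching of size $m$, so there is no blue $mK_2$; the red graph is the join of a clique of order $\floorfrac{k-1}{2}$ with an independent set, and since two independent-set vertices are never consecutive on a cycle, every red cycle has at most $2\floorfrac{k-1}{2}<k$ vertices.

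For the upper bound, set $N:=2m+\floorfrac{k-1}{2}$ and suppose $K_N$ is red-blue coloured with no blue $mK_2$; I want to find a red $C_k$. Fix a maximum blue matching $M$, with edges $a_ib_i$ for $i\in[t]$ (so $t\le m-1$), and let $U:=V(K_N)\setminus V(M)$. By maximality, $U$ spans a red clique, and $u:=|U|=N-2t\ge N-2(m-1)=\floorfrac{k-1}{2}+2$. If $u\ge k$ then $U$ already contains a red $C_k$, so assume $u\le k-1$ and put $b:=k-u\ge1$; note that $u\ge b+2$ (immediate from $u\ge\floorfrac{k-1}{2}+2$) and $t\ge b$ (which holds since $N\ge 2k$, using $m\ge k$). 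A short augmenting-path argument then shows that each edge $a_ib_i$ has an endpoint $w_i$ that is red-adjacent to all but at most one vertex of $U$: otherwise both $a_i$ and $b_i$ have at least two blue neighbours in $U$, and picking $u'\in N_{\mathrm{blue}}(a_i)\cap U$ and $u''\in (N_{\mathrm{blue}}(b_i)\cap U)\setminus\{u'\}$ produces a blue augmenting path $u'a_ib_iu''$, contradicting maximality of $M$.

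It then remains to build a red $C_k$ using all of $U$ together with $b$ of the vertices $w_1,\dots,w_t$, in the cyclic pattern
\[
q_1,\ w_{(1)},\ q_2,\ w_{(2)},\ \dots,\ q_b,\ w_{(b)},\ q_{b+1},\ q_{b+2},\ \dots,\ q_u
\]
(closed up cyclically), where $w_{(1)},\dots,w_{(b)}$ are any $b$ distinct $w_i$'s in any order and $q_1,\dots,q_u$ is an ordering of $U$ to be chosen. Since $U$ is a red clique, the only requirement is that each $w_{(\ell)}$ be red-adjacent to both $q_\ell$ and $q_{\ell+1}$, i.e.\ that the (at most one) non-red-neighbour of $w_{(\ell)}$ in $U$ avoids $\{q_\ell,q_{\ell+1}\}$. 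Each of the ``endpoint'' positions $q_1,q_{b+1}$ thus forbids at most one vertex of $U$, and each interior position at most two, so --- using $u\ge b+2$ --- one can choose $q_1,\dots,q_{b+1}$ one at a time while keeping at least $u-b-1\ge1$ vertices of $U$ available at every step, and then let the remaining vertices be $q_{b+2},\dots,q_u$. This yields a red $C_k$, the desired contradiction, which proves $R(C_k,mK_2)\le N$. The only part requiring real care is this embedding: one must ensure that the few forbidden vertices of the chosen $w_i$'s can all be dodged simultaneously, which is precisely what the inequality $u\ge b+2$ (a consequence of $|U|\ge\floorfrac{k-1}{2}+2$) provides. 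Note finally that $k=3$ is immediate, as then $u\ge k$ automatically --- and it is in any case covered by \cref{Theorem_triangle}.
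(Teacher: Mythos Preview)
Your proof is correct, but it takes a genuinely different route from the paper's. The paper's upper bound argument applies the Tutte--Berge formula to the blue graph to obtain a set $S$ such that the blue graph $G-S$ has at least $\lfloor\tfrac{k+1}{2}\rfloor+1$ odd components and at least $k$ vertices; then the red graph on $V(G)\setminus S$ contains a complete multipartite graph on $k$ vertices with minimum degree at least $\lfloor\tfrac{k+1}{2}\rfloor$, and Dirac's theorem furnishes the red $C_k$ in one stroke. Your argument avoids both Tutte--Berge and Dirac: you work directly with a maximum blue matching, observe that the unmatched set $U$ is a red clique of size at least $\lfloor\tfrac{k-1}{2}\rfloor+2$, extract from each matching edge an endpoint $w_i$ red-adjacent to almost all of $U$ via a one-step augmenting argument, and then greedily thread a $C_k$ through $U$ and $b=k-|U|$ of the $w_i$'s. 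The greedy embedding works precisely because $|U|\ge b+2$, which you correctly derive. The paper's route is shorter and more conceptual; yours is more hands-on and entirely elementary, relying on nothing beyond the definition of a maximum matching. Both are perfectly valid.
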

\begin{proof}
    Let $N = 2m +\floorfrac{k-1}{2}$.
    For the lower bound, fix a subset $U \subseteq V(K_{N-1})$ of size $2m-1$ and consider the red-blue colouring of $K_{N-1}$ such that $\{u, v\} \in E(K_{N-1})$ is blue if and only if $\{u, v\} \subseteq U$.
    It is easy to check that this colouring contains neither a red $C_k$ nor a blue $mK_2$.

    For the upper bound, fix a colouring of $K_N$ and let $G$ be the blue graph.
    Assume that~$G$ contains no~$mK_2$.
    By the Tutte-Berge formula, there exists a vertex set $S\subseteq V(G)$ such that the size of the largest matching in $G$ is given by 
    \begin{equation}\label{eq:Tutte}
        \frac{1}{2}\left( N - odd(G - S) + |S| \right),    
    \end{equation} 
    where $odd(G-S)$ is the number of connected components of odd size in the graph $G - S$.
    Since~\eqref{eq:Tutte} is at most~$m-1$, we have $odd(G-S)\ge\floorfrac{k-1}{2} + 2+|S|$.
    This implies $|S|\le N/2$ since $odd(G-S)\le N-|S|$.
    Thus, $G-S$ has at least $N/2\ge k$ vertices and at least $\floorfrac{k+1}{2} + 1$ connected components.
    It follows that the complement of~$G$ contains a complete partite graph $F$ with~$k$ vertices and at least $\floorfrac{k+1}{2} + 1$ parts.
    Note that all edges in~$F$ are red and $F$ has minimum degree at least $\floorfrac{k+1}{2}$.
    By Dirac's theorem, $F$ is Hamiltonian.
    Therefore there is a red $C_k$ in the fixed colouring of $K_N$. 
\end{proof}

\section{Main proof}\label{sec:mainproof}

With the preliminary results in hand, we are ready to prove Theorem~\ref{theorem:main}.
As mentioned above, for induction it is more practical to prove a general bound, which holds for all values of~$e(H)$, and boils down to $2e(H)+\floorfrac {k-1}2 $ when $e(H)$ is sufficiently large.

\begin{theorem}\label{thm:main+}
    Let $k \geq 7$ be odd. 
    There exists a constant $B$ such that for any graph $H$ without isolated vertices we have that
    \[
        R(C_k, H) \leq 2e(H) + \max\left\{  \bfloor{B - \sqrt{e(H)}}, \bfloor{\frac{k}{2}} \right\}.
    \]
\end{theorem}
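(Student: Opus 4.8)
The plan is to prove the statement by induction on $e(H)$, fixing the constant $B$ at the very end to be sufficiently large in terms of $k$. Write $m=e(H)$, set $N=2m+\max\{\lfloor B-\sqrt m\rfloor,\lfloor k/2\rfloor\}$, and fix a red--blue colouring of $K_N$ whose red graph $G$ contains no $C_k$; the goal is to find a blue copy of $H$. For the base case (and whenever $3k\,|H|^{(k+1)/(k-1)}\le 2m$, i.e.\ $H$ is dense) one has $R(C_k,H)\le R(C_k,K_{|H|})\le 3k\,|H|^{(k+1)/(k-1)}$ by Proposition~\ref{prop:dense}, which is at most $2m\le N$ when $m$ is large and at most $2m+\lfloor B-\sqrt m\rfloor$ when $m$ is bounded and $B$ is large. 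So we may assume $H$ is not dense, i.e.\ $|H|\ge c_k\,m^{(k-1)/(k+1)}$ for some $c_k>0$ depending only on $k$. Two further cases are settled by the preliminary results: if $H$ is connected with average degree at most $2(1+(8k)^{-2})$ (and $|H|\ge(10k)^4$, else $m$ is bounded), then Lemma~\ref{lemma:sparse_connected} gives $R(C_k,H)=2|H|-1\le 2m+1\le N$; and if $H$ is a matching, Proposition~\ref{proposition:matching} applies directly.

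The engine of the proof is the following dichotomy. Since $G$ is $C_k$-free, the red graph induced on the neighbourhood of any vertex has no $P_k$, and by Lemma~\ref{lemma:path_second_neighbourhood} (valid since $k\ge5$) the red graph induced on the second neighbourhood of any vertex has no $P_{2k}$. Hence, by Corollary~\ref{cor:estR(Pk,H)}, if $G$ has a vertex whose red first or second neighbourhood has size at least $|H|+2k\sqrt{2m}\ge R(P_{2k},H)$, then the blue graph restricted to that neighbourhood already contains $H$, and we are done. So we may assume that every red first and second neighbourhood in $G$ has fewer than $|H|+2k\sqrt{2m}$ vertices; in particular $G$ has maximum degree less than $|H|+2k\sqrt{2m}$, so the blue graph has minimum degree greater than $N-1-|H|-2k\sqrt{2m}$, which is $\Theta(m)$ whenever $|H|$ is bounded away from $2m$.

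To finish we combine this with the induction hypothesis. If $H$ is close to a matching, say $2m-|H|\le m/4$, write $H=M\sqcup H_0$, where $M$ is a matching and $H_0$ is the union of the edges meeting a vertex of degree at least $2$ together with their endpoints, so that $e(H_0)\le m/2<m$; we first locate a blue $H_0$ using the induction hypothesis and few vertices, delete those vertices, and then run the Tutte--Berge argument of Proposition~\ref{proposition:matching} on the remainder to produce a blue matching of size $|M|$ (or else a red $C_k$). Otherwise $|H|$ is bounded away from $2m$. If $H$ has a bounded-size component $C$, we delete it, apply the induction hypothesis to $H-C$, and re-find $C$ as a blue copy among the leftover vertices, of which there are at least $R(C_k,C)$ because $N-|H|+|C|$ is large (this is where the leftover being large matters; for near-matching $H$ the amount of leftover needed instead forces the Tutte--Berge route). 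If instead $H$ has a pendant vertex $u$ with neighbour $w$, we delete $u$, apply the induction hypothesis to $H-u$, and re-attach $u$ at the image of $w$: this is possible because that image has large blue degree (its red degree is less than $|H|+2k\sqrt{2m}$, by the previous paragraph) while only $|H|-2$ vertices are already used. The remaining cases --- $H$ connected of unbounded average degree but not dense enough for Proposition~\ref{prop:dense}, and $H$ disconnected with all components large --- are treated by removing a carefully chosen vertex or small substructure of controlled degree and re-embedding it similarly, where one keeps track of the fact that deleting a substructure with $j$ edges lowers the budget $N$ by $2j$ while raising the error term by less than $1$ (since $j=o(\sqrt m)$ in these cases).

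The main obstacle is closing this last paragraph uniformly: one must select, for every $H$ not covered by the earlier cases, a vertex or small subgraph whose removal leaves a graph with no isolated vertices and strictly fewer edges and whose re-embedding fits into the only linearly many leftover vertices. This is delicate in two regimes: connected $H$ that is dense but not dense enough for Proposition~\ref{prop:dense}, where the removed vertex must have small degree yet the choice must preserve enough structure; and disconnected $H$ with no small component --- in particular forest-like $H$ with many large tree components --- where neither a pendant, an isolated edge, nor a bounded-size component is available. The error term $\max\{\lfloor B-\sqrt{e(H)}\rfloor,\lfloor k/2\rfloor\}$ is tuned precisely so that it is generous enough, for moderate $e(H)$, to absorb the losses incurred in these inductive reductions, while collapsing to $\lfloor k/2\rfloor$ once $e(H)$ is large, thereby yielding Theorem~\ref{theorem:main}.
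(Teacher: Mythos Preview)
Your preliminary reductions (dense $H$ via Proposition~\ref{prop:dense}, sparse connected $H$ via Lemma~\ref{lemma:sparse_connected}, matchings via Proposition~\ref{proposition:matching}) and your identification of the key tools (Corollary~\ref{cor:estR(Pk,H)} and Lemma~\ref{lemma:path_second_neighbourhood}) match the paper. But the inductive engine you propose---peeling off a pendant, a small component, or a carefully chosen substructure from $H$ and re-attaching it afterwards using only the bound on the maximum red degree---is not the paper's engine, and as you yourself acknowledge, your case analysis does not close: connected $H$ of moderate density and disconnected $H$ with only large components are left unhandled. These are not corner cases; they are the heart of the problem, and the bound ``red degree $<|H|+2k\sqrt{2m}$'' is far too weak on its own to re-embed a deleted vertex of $H$ when $\delta(H)$ is large.

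The paper's engine avoids all structural casework on $H$. First, disconnected $H$ is disposed of in one stroke: if $H$ is not a matching it has a component $C$ with $2\le e(C)<m$, and one embeds $C$ and $H-C$ separately by induction (the shape of the error term $\lfloor B-\sqrt{e(H)}\rfloor$ is precisely what makes this arithmetic work). For connected $H$ with $m^{2/3}\le |H|\le (1-(20k)^{-2})m$, the paper does \emph{not} merely bound the maximum red degree from above; it actively \emph{produces} a vertex $u$ with red degree at least $(N-|H|+1)/\delta(H)\ge |H|/2+k\sqrt{2m}$, by first embedding a blue copy of $H-v$ (with $v$ of minimum degree in $H$) via induction and then pigeonholing over the images of $N_H(v)$. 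With $U_1$ the red neighbourhood of $u$, $\Pi$ its second red neighbourhood, and $U_2$ the remainder, every $U_1$--$U_2$ edge is blue, $G[U_1]$ has no red $P_{k-1}$, and $|\Pi|<|H|+2k\sqrt{2m}$ by Lemma~\ref{lemma:path_second_neighbourhood} plus Corollary~\ref{cor:estR(Pk,H)}. Now take a \emph{random} partition $V(H)=V_1\cup V_2$ with $|V_1|=|U_1|-k\sqrt{2m}$; a first-moment bound gives $e(H[V_2])\le m(|V_2|/|H|)^2$, small enough that induction embeds $H[V_2]$ in blue inside $U_2$, while Corollary~\ref{cor:estR(Pk,H)} embeds $H[V_1]$ in blue inside $U_1$, and the crossing edges come for free. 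This simultaneous partition of host and target is the idea your proposal is missing.
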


\begin{proof}
    Let $m_0$ be a large enough constant with respect to $k$ (e.g., $m_0 = 2^{63} k^{18}$ works) and set $B := (2m_0)^3$.
    We will prove the statement by induction on $e(H)$. 
    For the base case, where $e(H) \leq m_0$, we notice that the statement holds since
    \[
        R(C_k, H) \leq R(C_k, K_{2m_0}) \leq 3k \cdot (2m_0)^{(k+1)/(k-1)} \leq 3k \cdot (2m_0)^2 \leq B \leq 2e(H) + B - \sqrt{e(H)},
    \]
    where we used \cref{prop:dense} and that $m_0$ is large enough with respect to $k$.

    We now let $H$ be a graph on $n$ vertices with $m = e(H) > m_0$ edges and no isolated vertices.
    Suppose that the statement holds for every graph $H'$ with $e(H') < m$ and no isolated vertices.
    We let $N := 2m + \max\{ B - \bceil{\sqrt{m}}, \bfloor{\frac{k}{2}} \}$ and fix a red-blue colouring $G$ of $K_N$.
    Suppose for contradiction that $G$ contains neither a red copy of $C_k$ nor a blue copy of $H$.
    We first show that this cannot happen if $H$ is disconnected, too dense or too sparse.

    \begin{claim}\label{claim:connected+dense}
        $H$ is connected and 
        \[
            m^{2/3} \leq n \leq (1 - (20k)^{-2})m.
        \]
    \end{claim}
    \begin{claimproof}[Proof of Claim~\ref{claim:connected+dense}]
        Suppose first that $H$ is not connected.
        If $H$ is a matching, then~\cref{proposition:matching} implies~$G$ contains a red~$C_k$ or a blue~$H$, contradiction.
        Otherwise, $H$ contains a connected component $C$ with $2 \leq e(C) < m$ edges and $|C| \leq e(C) + 1$  vertices.
        By the inductive hypothesis, we have that
        \[
            R(C_k, C) \leq 2e(C) + \max \left\{ B - \sqrt{e(C)}, \bfloor{\frac{k}{2}} \right\} \leq N,
        \]
        where for the second inequality we used that $f(x) = 2x +B - \sqrt{x}$ is increasing for $x \geq 2$.
        Similarly,
        \begin{align*}
            R(C_k, H - C) &\leq 2(m -e(C)) + \max\left\{ \bfloor { B - \sqrt{m -e(C)} }, \bfloor{\frac{k}{2}} \right\}\\
            &\leq N - 2e(C)  + \max\left\{B - \bceil{\sqrt{m - e(C)}}, \bfloor{\frac{k}{2}}\right\} - \max\left\{ B - \bceil{\sqrt{m}}, \bfloor{\frac{k}{2}} \right\} \\
            &\leq N - e(C) -1 + \left( 1+ \bceil{\sqrt{m}} - \bceil{\sqrt{m - e(C)}} - e(C)  \right)\\
            &\leq N - |C|,
        \end{align*}
        where for the final inequality we used that
        \[
            \bceil{\sqrt{m}} - \bceil{\sqrt{m - e(C)}} \leq \bceil{\sqrt{m} - \sqrt{m - e(C)}}
        \]
        and that for $m \geq 3$ and $2 \leq x \leq m$ the function $f(x) = 1 + \bceil{\sqrt{m} - \sqrt{m-x}} -x$ takes its maximum at~$x=2$.
        Combining the two above observations, we can first find a blue copy $C'$ of $C$ in $G$ and then a blue copy of $H- C$ in $G - C'$ --- a contradiction to $G$ containing no blue copy of $H$.

        We can therefore assume that $H$ is connected.
        If $n \geq (1-(20k)^{-2})m$ 
        , since $m \geq m_0$ is large enough, we get $R(C_k, H) \leq 2n  -1 < N$ by \cref{lemma:sparse_connected}.
        On the other extreme, if $n \leq m^{2/3}$, then by \cref{prop:dense} we get
        \[
            R(C_k, H) \leq R(C_k, K_{m^{2/3}}) \leq 3k \cdot m^{2(k+1)/3(k-1)}\le   3k \cdot m^{8/9}\leq 2m < N,
        \]
        where we again used that $m \geq m_0$ is large enough.
        In either case, we get a contradiction --- which proves the claim.
    \end{claimproof}

    We now fix a vertex $v \in V(H)$ of minimum degree $\delta$.
    By the inductive hypothesis, we can find a blue copy of $H - v$ in $G$.
    Let $U = \{u_1, \dots, u_{\delta}\}$ be the images of $N_H(v)$  in this copy and let $S \subset V(G)$ be the vertices not in this copy.
    Note that $|S| = N - n + 1$ and that, since $G$ contains no blue copy of $H$, each vertex in $S$ must have at least one red neighbour in $U$.
    By the pigeonhole principle, there must exist a vertex $u \in U$ with at least $(N - n + 1) /\delta$ red neighbours in $G$.
    
    Let therefore $U_1$ be the red neighbourhood of $u$, let $\Pi$ be the second red neighbourhood of $u$ and let $U_2 = V(G) \setminus (\{u\} \cup U_1 \cup \Pi )$.
    Notice that by construction all edges between $U_1$ and $U_2$ in $G$ are blue.
    We will want to argue that we can embed some part of $H$ into $G[U_1]$ in blue and the rest into $G[U_2]$ --- which would give us a blue copy of $H$ in $G$, see Figure~\ref{fig:mainproof}.
    To that end, we first argue that both $U_1$ and $U_2$ are large enough.

    \begin{figure}
\centering
 \begin{tikzpicture}
        
        \fill[red!70] (0,0) -- (1.5,-1.5) -- (-1.5,-1.5) -- (0,0);
        \fill[red!30] (-1.5,-1.5) -- (-4.5,-3) -- (-2,-3)--(1.5,-1.5);
        \fill[blue!70] (1.5,-1.5) -- (4.5,-3) -- (2,-3)--(-1.5,-1.5);

        \filldraw[fill=white] (0,-1.5) ellipse (1.5cm and 0.75cm);
        \filldraw[fill=white] (0,-1.5) ellipse (1.2cm and 0.45cm);
        
        \filldraw (0,0) circle (2pt) node[anchor=south]{$u$};
        \filldraw[fill=white] (-3,-3) ellipse (1.5cm and 1cm);
        \filldraw[fill=white] (3,-3) ellipse (1.5cm and 1cm);

        \node at (-1.85,-0.75) {$U_1$};
        \node at (0,-1.5) {$H_1$};
        \node at (4.3,-2) {$U_2$};
        \node at (3,-3) {$\ge R(C_k,H_2)$};
        \node at (-3,-3) {$< R(P_{2k},H)$};
        \node at (-4.3,-2) {$\Pi$};
        \end{tikzpicture}
        \caption{
        In the proof of Theorem~\ref{thm:main+} we fix a vertex $u$ in our host graph with a large red neighbourhood $U_1$.
        By Lemma~\ref{lemma:ramsey_of_path_vs_any} we can embed a large part $H_1$ of $H$ into $U_1$.
        We can then show that either the second red neighbourhood $\Pi$ of $u$ has size at least $R(P_{2k}, H)$ --- in which case we can find a copy of $H$ in $\Pi$ --- or $U_2 = V(G) \setminus(U_1 \cup \Pi \cup \{u \})$ is large enough for us to find the rest of $H$ in there by induction.
        Note that by definition all the edges between $U_1$ and $U_2$ are blue.
    }

        \label{fig:mainproof}
\end{figure}
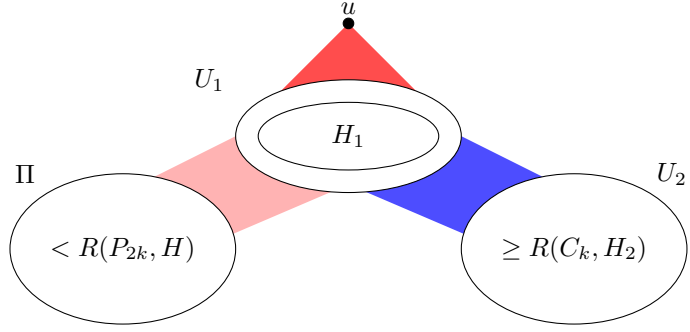

    \begin{claim}\label{claim:U_1bound}
        We have that
        \[
            \frac{n}{2} + k\sqrt{2m} \leq |U_1| \leq n + k\sqrt{2m}.
        \]
    \end{claim}
    \begin{claimproof}[Proof of Claim~\ref{claim:U_1bound}]
        For the lower bound let $d = 2m/n \geq 2 / (1 - (20k)^{-2})$ be the average degree in $H$ and notice that
        \[
            |U_1| \geq \frac{N - n + 1}{\delta} \geq \frac{2m  - n}{d} = n - \frac{n}{d} \geq \frac{n}{2} + k\sqrt{2m},
        \]
        where we used that $n \geq m^{2/3}$ is large enough with respect to $k$.

        For the upper bound, notice first that there is no red $P_{k-1}$ in $G[U_1]$, since any such path would form a red $C_k$ together with $u$ in $G$.
        Therefore, if $|U_1| \geq n + k\sqrt{2m}$, then by Corollary~\ref{cor:estR(Pk,H)} we can find a blue copy of $H$ in $G[U_1]$ --- a contradiction.
    \end{claimproof}

    We will want to embed $|U_1| - k\sqrt{2m}$ vertices of $H$ into $U_1$ and use the induction hypothesis to embed the rest of $H$ into $U_2$.
    The next claim says that  $U_2$ is large enough to do so.

    \begin{claim}\label{claim:U_2bound}
        We have that
        \[
            |U_2| \geq \max\left\{ n - |U_1| + k\sqrt{2m}, \hspace{5px}  2m \cdot \left( \frac{n - |U_1| + k\sqrt{2m}}{n} \right)^2 + B \right\}.
        \]
    \end{claim}
    \begin{claimproof}[Proof of Claim~\ref{claim:U_2bound}]
        We will want to use that $|U_2| = N -1 - |U_1| - |\Pi|$.
        To that end, we first notice that by \cref{lemma:path_second_neighbourhood} we get that $\Pi$ contains no red copy of $P_{2k}$.
        Therefore, we get that $|\Pi| \leq n  +2k\sqrt{2m}$ --- otherwise we could find a blue $H$ in $G[\Pi]$ by Corollary~\ref{cor:estR(Pk,H)}.

        Let now $\lambda = \frac{|U_1| - k\sqrt{2m}}{n}$ and notice that $1/2 \leq \lambda \leq 1$ by~\cref{claim:U_1bound}.
        We have that
        \begin{align*}
            |U_2| \geq 2m - 1 - |U_1| - n - 2k\sqrt{2m} \geq n - |U_1| + k\sqrt{2m} 
        \end{align*}
        and
        \begin{align*}
            |U_2| - 2m \cdot \left( \frac{n - |U_1|+ k\sqrt{2m}}{n} \right)^2 - B &\geq N - 2m \cdot \left( \frac{n - |U_1|+ k\sqrt{2m}}{n} \right)^2 - B - 1 - |U_1| - |\Pi|\\
            &\geq 2m \left( 1 - (1 - \lambda)^2 \right) - \bceil{\sqrt{m}} - 1 - |U_1| - n -2k\sqrt{2m}\\
            &\geq 2m \left(2\lambda - \lambda^2 - \frac{|U_1| - k\sqrt{2m}}{2m} \right) -n -4k\sqrt{2m}\\
            &\geq 2m \left(\frac{3}{2}\lambda - \lambda^2 \right) - n  - 4k\sqrt{2m}\\
            &\geq (20k)^{-2} m - 4k\sqrt{2m}\\
            &\geq 0,
        \end{align*}
        where we used that $n \leq (1 - (20k)^{-2})m$, that $m \geq m_0$ is large enough with respect to $k$ and that $\frac{3}{2}\lambda -\lambda^2 \geq 1/2$ for $1/2 \leq \lambda \leq 1$.
        This proves the claim.
    \end{claimproof}

    Finally, we fix a partition $V(H) = V_1 \cup V_2$ such that $|V_1| = |U_1| - k\sqrt{2m}$ and
    \[
        e(H[V_2]) \leq m \left( \frac{n -|V_1|}{n} \right)^2 = m \left( \frac{n - |U_1| + k\sqrt{2m}}{n} \right)^2.
    \]
    Such a partition exists, since the expected number of edges in a random set of order $n - |V_1|$ is at most the required bound.
    Since $G[U_1]$ contains no red $P_{k-1}$, by Corollary~\ref{cor:estR(Pk,H)} we can find a blue copy of $H[V_1]$ in $G[U_1]$.
    Let $H_2$ be the graph obtained from $H[V_2]$ by removing the isolated vertices.
    By the induction hypothesis, we also get that $ R(C_k, H_2) \leq 2e(H[V_2]) + B \leq |U_2|$.
    Since $|U_2| \geq n - |U_1| + k\sqrt{2m} = |V_2|$, we can therefore find a blue copy of $H[V_2]$ in $G[U_2]$.
    Finally, since all the edges between $U_1$ and $U_2$ in $G$ are blue, this gives us a blue copy of $H$ in $G$ and finishes the proof.

\end{proof}

\section*{Acknowledgement}
We thank the organizers of the online workshop \emph{Topics in Ramsey theory} of the Sparse Graphs Coalition\footnote{For more information, see \url{https://sparse-graphs.mimuw.edu.pl/doku.php}.}, where this project started.

\bibliographystyle{yuval}
\bibliography{ref}

\end{document}